\documentclass[12pt,amsymb,fullpage]{amsart}
\usepackage{amssymb,amscd,pstricks}

\newtheorem{theorem}{Theorem}[section]
\newtheorem{defn}[theorem]{Definition}

\newtheorem{lemma}[theorem]{Lemma}

\newtheorem{eple}[theorem]{Example}
\newtheorem{rmk}[theorem]{Remarks}
\newtheorem{dsc}[theorem]{Discussion}
\newtheorem{nota}[theorem]{Notation}

\newsavebox{\indbin}
\savebox{\indbin}{\begin{picture}(0,0)
\newlength{\gnu}
\settowidth{\gnu}{$\smile$} \setlength{\unitlength}{.5\gnu}
\put(-1,-.65){$\smile$} \put(-.25,.1){$|$}
\end{picture}}

\newcommand{\be}{\begin{enumerate}}
\newcommand{\bd}{\begin{defn}}
\newcommand{\bt}{\begin{theorem}}
\newcommand{\bl}{\begin{lemma}}
\newcommand{\ee}{\end{enumerate}}
\newcommand{\ed}{\end{defn}}
\newcommand{\et}{\end{theorem}}
\newcommand{\el}{\end{lemma}}

\begin{document}
\title{A Simple Proof of a Martingale Representation Theorem Using Nonstandard Analysis}
\author{Tristram de Piro}
\address{Mathematics Department, Harrison Building, Streatham Campus, University of Exeter, North Park Road, Exeter, Devon, EX4 4QF, United Kingdom}
 \email{tdpd201@exeter.ac.uk}
\maketitle
\begin{abstract}
We give a proof of a Martingale Representation Theorem using the methods of nonstandard analysis.
\end{abstract}

We introduce the following spaces;\\

\begin{defn}
\label{prob}

Let $\nu\in{{^{*}\mathcal{N}}\setminus\mathcal{N}}$, and set $\eta=2^{\nu}$. Define;\\

$\overline{\Omega_{\eta}}=\{x\in{^{*}\mathcal{R}}:0\leq x<1\}$\\

$\overline{\mathcal{T}_{\nu}}=\{x\in{^{*}\mathcal{R}}:0\leq x\leq 1\}$\\

We let $\mathcal{C}_{\eta}$ consist of internal unions of the intervals $[{i\over\eta},{i+1\over\eta})$, for $0\leq i\leq \eta-1$, and let $\mathcal{D}_{\nu}$ consist of internal unions $[{i\over\nu},{i+1\over\nu})$, for $0\leq i\leq \nu-1$, together with $\{1\}$\\

We define counting measures $\mu_{\eta}$ and $\lambda_{\nu}$ on $\mathcal{C}_{\eta}$ and $\mathcal{D}_{\nu}$ respectively, by setting $\mu_{\eta}([{i\over\eta},{i+1\over\eta}))={1\over\eta}$, $\lambda_{\nu}(({i\over\nu},{i+1\over\nu}])={1\over\nu}$ and $\lambda_{\nu}(\{1\})=0$\\

We let $(\overline{\Omega_{\eta}},\mathcal{C}_{\eta},\mu_{\eta})$ and $(\overline{\mathcal{T}}_{\nu},\mathcal{D}_{\nu},\lambda_{\nu})$ be the resulting ${*}$-finite measure spaces, in the sense of \cite{L}, and let $(\overline{\Omega_{\eta}},L(\mathcal{C}_{\eta}),L(\mu_{\eta}))$, $(\overline{\mathcal{T}}_{\nu},L(\mathcal{D}_{\nu}),L(\lambda_{\nu}))$ be the associated Loeb spaces.\\

We let $V(\mathcal{C}_{\eta})=\{f:\overline{\Omega_{\eta}}\rightarrow{^{*}{\mathcal{C}}},f(x)=f({[\eta x]\over\eta})\}$  and $W(\mathcal{C}_{\eta})\subset V(\mathcal{C}_{\eta})$ be the set of measurable functions $f:\overline{\Omega_{\eta}}\rightarrow{^{*}{\mathcal{C}}}$, with respect to $\mathcal{C}_{\eta}$, in the sense of \cite{L}. Then
$W(\mathcal{C}_{\eta})$ is a ${*}$-finite vector space over ${^{*}{\mathcal{C}}}$, of dimension $\eta$, (\footnote{\label{vsp} By a ${*}$-vector space, one means an internal set $V$, for which the operations $+:V\times V\rightarrow V$ of addition and scalar multiplication $.:{^{*}\mathcal{C}}\times V\rightarrow V$ are internal. Such spaces have the property that $*$-finite linear combinations ${^{*}}\Sigma_{i\in I}\lambda_{i}.v_{i}$, $(*)$, for a $*$-finite index set $I$, belong to $V$, by transfer of the corresponding standard result for vector spaces. We say that $V$ is a $*$-finite vector space, if there exists a $*$-finite index set $I$ and elements $\{v_{i}:i\in I\}$ such that every $v\in V$ can be written as a combination $(*)$, and the elements $\{v_{i}:i\in I\}$ are independent, in the sense that if $(*)=0$, then each $\lambda_{i}=0$. It is clear, by transfer of the corresponding result for finite dimensional vector space over $\mathcal{C}$, that $V$ has a well defined dimension given by $Card(I)$, see \cite{dep}, even though $V$ may be infinite dimensional, considered as a standard vector space.}). Similarly, we let $V(\mathcal{D}_{\nu})=\{f:\overline{\mathcal{T}_{\nu}}\rightarrow{^{*}{\mathcal{C}}},f(t)=f({[\nu t]\over\nu})\}$  and $W(\mathcal{D}_{\nu})\subset V(\mathcal{D}_{\nu})$ be the set of measurable functions $f:\overline{\mathcal{T}_{\nu}}\rightarrow{^{*}{\mathcal{C}}}$, with respect to $\mathcal{D}_{\nu}$, in the sense of \cite{L}. Then
$W(\mathcal{D}_{\nu})$ is a ${*}$-finite vector space over ${^{*}{\mathcal{C}}}$, of dimension $\nu+1$.

\end{defn}

\begin{defn}
\label{binary}
Given $n\in\mathcal{N}_{>0}$, we let $\Omega_{n}=\{m\in\mathcal{N}:0\leq m<2^{n}\}$, and let $C_{n}$ be the set of sequences of length $n$, consisting of $1$'s and $-1$'s. We let $\theta_{n}:\Omega_{n}\rightarrow{\mathcal{N}}^{n}$ be the map which associates $m\in\Omega_{n}$ with its binary representation, and let $\phi_{n}:\Omega_{n}\rightarrow C_{n}$ be the composition $\phi_{n}=(\gamma\circ\theta_{n})$, where, for $\bar{m}\in {\mathcal{N}}^{n}$, $\gamma(\bar{m})=2.\bar{m}-\bar{1}$. For $\nu\in{{^{*}\mathcal{N}}\setminus\mathcal{N}}$, we let $\phi_{\nu}:\Omega_{\nu}\rightarrow C_{\nu}$ be the map, obtained by transfer of $\phi_{n}$, which associates $i\in{^{*}\mathcal{N}}$, $0\leq i<2^{\nu}$, with an internal sequence of length $\nu$, consisting of $1$'s and $-1$'s. Similarly, for $\eta=2^{\nu}$, we let $\psi_{\eta}:\overline{\Omega_{\eta}}\rightarrow C_{\nu}$ be defined by $\psi_{\eta}(x)=\phi_{\nu}([\eta x])$. For $1\leq j\leq \nu$, we let $\omega_{j}:C_{\nu}\rightarrow \{1,-1\}$ be the internal projection map onto the $j$'th coordinate, and let $\omega_{j}:\overline{\Omega_{\eta}}\rightarrow \{1,-1\}$ also denote the composition $(\omega_{j}\circ\psi_{\eta})$, so that $\omega_{j}\in W(\overline{\Omega_{\eta}})$. By convention, we set $\omega_{0}=1$. For an internal sequence $\overline{t}\in C_{\nu}$, we let $\omega_{\overline{t}}:\overline{\Omega_{\eta}}\rightarrow \{1,-1\}$ be the internal function defined by;\\

$\omega_{\overline{t}}=\prod_{1\leq j\leq\nu}\omega_{j}^{{\overline{t}(j)+1\over 2}}$\\

Again, it is clear that $\omega_{\overline{t}}\in W(\overline{\Omega_{\eta}})$.

\end{defn}

\begin{lemma}
\label{nsindbasis}
The functions $\{\omega_{j}:1\leq j\leq\nu\}$ are $*$-independent in the sense of \cite{A}, (Definition 19), in particular they are orthogonal with respect to the measure $\mu_{\eta}$. Moreover, the functions $\{\omega_{\overline{t}}:\overline{t}\in C_{\nu}\}$ form an orthogonal basis of $V(\overline{\Omega_{\eta}})$, and, if $\overline{t}\neq\overline{-1}$, $E_{\eta}(\omega_{\overline{t}})=0$, and $Var_{\eta}(\omega_{\overline{t}})=1$, where, $E_{\eta}$ and $Var_{\eta}$ are the expectation and variance corresponding to the measure $\mu_{\eta}$.

\end{lemma}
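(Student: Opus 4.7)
\medskip
\noindent\textbf{Proof plan.} The plan is to reduce everything to a direct combinatorial computation on the partition $\{[i/\eta,(i+1)/\eta):0\le i<\eta\}$, transferring the classical Walsh--Rademacher picture. First I would verify $*$-independence of $\{\omega_j:1\le j\le\nu\}$ in the sense of \cite{A}: given any internal assignment $\epsilon:\{1,\dots,\nu\}\to\{1,-1\}$, the set $\bigcap_{j=1}^{\nu}\omega_j^{-1}(\epsilon_j)$ is exactly one interval $[i/\eta,(i+1)/\eta)$, namely the one whose index $i$ has binary expansion $(\epsilon+\bar 1)/2$. Hence it has $\mu_\eta$-measure $1/\eta=2^{-\nu}=\prod_{j=1}^{\nu}\mu_\eta(\omega_j^{-1}(\epsilon_j))$, which is the product formula defining $*$-independence. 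By the same counting, $E_\eta(\omega_j)=0$ for each $j$, and so orthogonality $E_\eta(\omega_j\omega_k)=E_\eta(\omega_j)E_\eta(\omega_k)=0$ for $j\neq k$ follows immediately from $*$-independence.

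Next I would observe that, because $\omega_j$ takes values in $\{1,-1\}$, we have $\omega_j^{2}=1$, and therefore $\omega_j^{(\bar t(j)+1)/2}$ equals $\omega_j$ when $\bar t(j)=1$ and equals $1$ when $\bar t(j)=-1$. Writing $S(\bar t)=\{j:\bar t(j)=1\}$, this gives the clean identity $\omega_{\bar t}=\prod_{j\in S(\bar t)}\omega_j$. The expectation then factorises by $*$-independence: $E_\eta(\omega_{\bar t})=\prod_{j\in S(\bar t)}E_\eta(\omega_j)$, which vanishes as soon as $S(\bar t)\neq\emptyset$, i.e.\ $\bar t\neq\overline{-1}$. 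Since $\omega_{\bar t}^{2}\equiv 1$, we get $E_\eta(\omega_{\bar t}^{2})=1$ and hence $\mathrm{Var}_\eta(\omega_{\bar t})=1$ whenever $\bar t\neq\overline{-1}$.

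For orthogonality of distinct $\omega_{\bar t},\omega_{\bar s}$, I would use $\omega_j^{2}=1$ again to collapse the product: $\omega_{\bar t}\omega_{\bar s}=\prod_{j\in S(\bar t)\triangle S(\bar s)}\omega_j=\omega_{\bar r}$ for the internal sequence $\bar r$ with $S(\bar r)=S(\bar t)\triangle S(\bar s)$. If $\bar t\neq\bar s$ then $S(\bar r)\neq\emptyset$, so $\bar r\neq\overline{-1}$ and by the previous step $E_\eta(\omega_{\bar t}\omega_{\bar s})=E_\eta(\omega_{\bar r})=0$. Finally, for the basis claim: $V(\overline{\Omega_\eta})$ is a $*$-finite vector space of dimension $\eta=2^{\nu}$ (its elements are exactly the internal functions constant on each interval $[i/\eta,(i+1)/\eta)$, so the indicators of those intervals form a $*$-basis); the family $\{\omega_{\bar t}:\bar t\in C_\nu\}$ has cardinality $2^{\nu}$ and is pairwise orthogonal with each member of nonzero norm, hence $*$-linearly independent by transfer of the standard argument, and therefore, by the dimension count of footnote \ref{vsp}, already a $*$-basis.

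The main obstacle I anticipate is purely bookkeeping, namely checking that the abstract $*$-independence criterion of \cite{A} (Definition 19) is literally the product-of-preimage-measures statement I use, and that the transfer of ``pairwise orthogonal nonzero vectors in a $d$-dimensional space of cardinality $d$ form a basis'' is legitimate in the internal, $*$-finite-dimensional setting; once these are in place the computation is simply the classical Walsh basis identity $\omega_{\bar t}\omega_{\bar s}=\omega_{\bar t\triangle\bar s}$ pushed through transfer.
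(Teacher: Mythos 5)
Your proposal is correct and follows essentially the same route as the paper: counting preimages on the dyadic partition to get $*$-independence, using $\omega_j^2=1$ to reduce $\omega_{\bar t}\omega_{\bar s}$ to the product over the symmetric difference of the index sets, factorising the expectation by independence, and concluding the basis claim from orthogonality plus the dimension count $2^{\nu}$. The only cosmetic difference is that you verify the product formula on the atoms $\bigcap_j\omega_j^{-1}(\epsilon_j)$ and note that Anderson's Definition 19 (stated via joint distribution functions, i.e.\ sets $\{\omega_{j_k}<\alpha_k\}$) follows by summing over atoms, whereas the paper verifies that CDF formulation directly with a case split on the thresholds $\alpha_k$; both amount to the same count.
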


\begin{proof}
According to the definition, we need to verify that for an internal index set $J=\{j_{1},\ldots,j_{s}\}\subseteq\{1,\ldots,\nu\}$, and an internal tuple $(\alpha_{1},\ldots,\alpha_{s})$, where $s=|J|$;\\

$\mu_{\eta}(x:\omega_{j_{1}}(x)<\alpha_{1},\ldots,\omega_{j_{k}}(x)<\alpha_{k},\ldots,\omega_{j_{s}}(x)<\alpha_{s})$\\

$=\prod_{k=1}^{s}\mu_{\eta}(x:\omega_{j_{k}}(x)<\alpha_{k})$ $(*)$\\

Without loss of generality, we can assume that each $\alpha_{j_{k}}>-1$, as if some $\alpha_{j_{k}}\leq -1$, both sides of $(*)$ are equal to zero. Let $J'=\{j'\in J:-1<\alpha_{j'}\leq 1\}$ and $J''=\{j''\in J:1<\alpha_{j''}\}$, so $J=J'\cup J''$. Then;\\

$\mu_{\eta}(x:\omega_{j_{1}}(x)<\alpha_{1},\ldots,\omega_{j_{s}}(x)<\alpha_{s})$\\

\noindent $={1\over \eta}Card(z\in C_{\nu}:z(j')=-1\ for\ j'\in J',z(j'')\in\{-1,1\}\ for\ j''\in J'')$\\

\noindent $={1\over 2^{\nu}}Card(z\in C_{\nu}:z(j')=-1\ for\ j'\in J')={2^{\nu-s'}\over 2^{\nu}}=2^{-s'}$\\

where $s'=Card(J')$. Moreover;\\

$\prod_{k=1}^{s}\mu_{\eta}(x:\omega_{j_{k}}(x)<\alpha_{k})=\prod_{j'\in J'}\mu_{\eta}(x:\omega_{j'}(x)=-1)=2^{-s'}$\\

as $\mu_{\eta}(x:\omega_{j}(x)=-1)={1\over 2}$, for $1\leq j\leq \nu$. Hence, $(*)$ is shown. That $*$-independence implies orthogonality follows easily by transfer, from the corresponding fact, for finite measure spaces, that $E(X_{j_{1}}X_{j_{2}})=E(X_{j_{1}})E(X_{j_{2}})$, for the standard expectation $E$ and independent random variables $\{X_{j_{1}},X_{j_{2}}\}$, $(**)$. Hence, by $(**)$;\\

$E_{\eta}(\omega_{j_{1}}\omega_{j_{2}})=E_{\eta}(\omega_{j_{1}})E_{\eta}(\omega_{j_{2}})=0$, $(j_{1}\neq j_{2})$ $(***)$\\

as clearly $E_{\eta}(\omega_{j})=0$, for $1\leq j\leq \nu$. If $\overline{t}\neq\overline{-1}$, let $J'=\{j':1\leq j'\leq\nu, \overline{t}(j')=1\}$, then;\\

\noindent $E_{\eta}(\omega_{\overline{t}})=E_{\eta}(\prod_{1\leq j\leq\nu}\omega_{j}^{{\overline{t}(j)+1\over 2}})=E_{\eta}(\prod_{j'\in J'}\omega_{j'})=\prod_{j'\in J'}E_{\eta}(\omega_{j'})=0$ $(\sharp)$\\

where, in $(\sharp)$, we have used the facts that $J'\neq\emptyset$ and internal, and a simple generalisation of $(***)$, by transfer from the corresponding fact for finite measure spaces. Hence, $1=\omega_{\overline{-1}}$ is orthogonal to $\omega_{\overline{t}}$, for $\overline{t}\neq\overline{-1}$. If $\overline{t_{1}}\neq\overline{t_{2}}$ are both distinct from $\overline{-1}$, then, if $J_{1}=\{j:1\leq j\leq\nu,\overline{t_{1}}(j)=1\}$ and $J_{2}=\{j:1\leq j\leq\nu,\overline{t_{2}}(j)=1\}$, so $J_{1}\neq J_{2}$ and $J_{1},J_{2}\neq\emptyset$, we have;\\

$E_{\eta}(\omega_{\overline{t_{1}}}\omega_{\overline{t_{2}}})$\\

$=E_{\eta}(\prod_{j\in J_{1}}\omega_{j}.\prod_{j\in J_{2}}\omega_{j})$ $(\sharp\sharp)$\\

$=E_{\eta}(\prod_{j\in (J_{1}\setminus J_{2})}\omega_{j}.\prod_{j\in (J_{2}\setminus J_{1})}\omega_{j})$ $(\sharp\sharp\sharp)$\\

$=E_{\eta}(\prod_{j\in (J_{1}\setminus J_{2})}\omega_{j})E_{\eta}(\prod_{j\in (J_{2}\setminus J_{1})}\omega_{j})=0$ $(\sharp\sharp\sharp\sharp)$\\

In $(\sharp\sharp)$, we have used the definition of $J_{1}$ and $J_{2}$,  and in $(\sharp\sharp\sharp)$, we have used the fact that $(J_{1}\cup J_{2})=(J_{1}\cap J_{2})\sqcup (J_{1}\setminus J_{2})\sqcup (J_{2}\setminus J_{1})$, and $\omega_{j}^{2}=1$, for $1\leq j\leq\nu$. Finally, in $(\sharp\sharp\sharp\sharp)$, we have used the facts that $(J_{1}\setminus J_{2})$ and $(J_{2}\setminus J_{1})$ are disjoint, and at least one of these sets is nonempty, the result of $(\sharp)$ and a similar generalisation of $(***)$. This shows that the functions $\{\omega_{\overline{t}}:\overline{t}\in C_{\nu}\}$ are orthogonal, $(****)$. That they form a basis for $V(\overline{\Omega_{\eta}})$ follows immediately, by transfer, from $(****)$ and the corresponding fact for finite dimensional vector spaces. The final calculation is left to the reader.
\end{proof}

We require the following;\\

\begin{defn}
\label{filtration}
For $0\leq l\leq\nu$, we define $\sim_{l}'$, on $C_{\nu}$, to be the internal equivalence relation given by;\\

$\overline{t}_{1}\sim_{l}'\overline{t}_{2}$ iff $\overline{t}_{1}(j)=\overline{t}_{2}(j)$ $(\forall j\leq l)$\\

We extend this to an internal equivalence relation on $\overline{\Omega}_{\eta}$, which we denote by $\sim_{l}$;\\

$x_{1}\sim_{l}x_{2}$ iff $\psi_{\eta}(x_{1})\sim_{l}\psi_{\eta}(x_{2})$ $(*)$\\

We let $\mathcal{C}_{\eta}^{l}$ be the $*$-finite algebra generated by the partition of $\overline{\Omega}_{\eta}$ into the $2^{l}$ equivalence classes with respect to $\sim_{l}$, $(*)$. As is easily verifed, we have $\mathcal{C}_{\eta}^{l_{l}}\subseteq \mathcal{C}_{\eta}^{l_{2}}$, if $l_{1}\leq l_{2}$, $\mathcal{C}_{\eta}^{0}=\{\emptyset,\overline{\Omega}_{\eta}\}$ and $\mathcal{C}_{\eta}=\mathcal{C}_{\eta}^{\nu}$. For $0\leq l\leq\nu$, we let $W(\mathcal{C}_{\eta}^{l})\subseteq W(\mathcal{C}_{\eta})$ be the set of measurable functions $f:\overline{\Omega}_{\eta}\rightarrow{^{*}\mathcal{C}}$, with respect to $\mathcal{C}_{\eta}^{l}$. We will refer to the collection $\{\mathcal{C}_{\eta}^{l}:0\leq l\leq\nu\}$ of $*$-finite algebras, as the nonstandard filtration associated to $\overline{\Omega}_{\eta}$. We produce a standard filtration $\{\mathfrak{D}_{t}:t\in [0,1]\}$, $(**)$, by following the method of \cite{A}, see Definition 7.14 of \cite{dep}, (replacing the equivalence relation $\sim$ there, by $\sim_{l}$, as given in $(*)$, and being careful to use the index $\nu$ instead of $\eta$. Note that Lemma 7.15 of \cite{dep} still applies in this case.) We also require a slight modification of the construction of Brownian motion in \cite{A}. Namely, we take;\\

$\chi(t,x)={1\over{\sqrt{\nu}}}({^{*}\sum}_{i=1}^{[\nu t]}\omega_{i})$, (\footnote{\label{timezero} We adopt the convention that the sum is zero, when $t=0$})\\

and $W(t,x)={^{\circ}\chi}(t,x)$, $(t,x)\in [0,1]\times\overline{\Omega_{\eta}}$ $(**)$.
\end{defn}

One of the advantages of the non-standard approach to stochastic calculus, is that it allows one to show easily that every stochastic integral is a martingale. We follow the notation from Chapter 7 of \cite{dep}, again using the filtration $(**)$ of Definition \ref{filtration} to replace the one from Definition 7.14, and its subsequent applications;\\

\begin{theorem}
\label{stochastic}
If $g\in{\mathcal G}_{0}$, and $f$ is a $2$-lifting of $g$, then $I(t,x)$, as in Definition 7.20 of \cite{dep}, is equivalent, as a stochastic process, to a martingale, with respect to the filtration $\mathfrak{D}_{t}$, (\footnote{\label{stmart} By which I mean a function $I:[0,1]\times\overline{\Omega_{\eta}}\rightarrow{\mathcal R}$, such that;\\

$(i)$. $I$ is $\mathfrak{B}\times\mathfrak{D}$ measurable (complete product).\\

$(ii)$. $I_{t}$ is measurable with respect to $\mathfrak{D}_{t}$, for $t\in [0,1]$.\\

$(iii)$. $E(|I_{t}|)<\infty$, for $t\in [0,1]$.\\

$(iv)$. $E(I_{t}|\mathfrak{D}_{s})=I_{s}$, if $s<t$ belong to $[0,1]$.\\

$(v)$. For $C\subset\overline{\Omega_{\eta}}$, with $L(\mu_{\eta})(C)=1$, and $x\in C$, the paths $\gamma_{x}:[0,1]\rightarrow{\mathcal R}$,\\
 \indent \ \ \ \ \ \ where $\gamma_{x}(t)=I(t,x)$, are continuous.\\

Most of this definition can be found in \cite{stee}, see also \cite{wil} for a thorough discussion of discrete time martingales. We call a martingale tame if it satisfies the additional conditions that;\\

$(vi)$. $I_{1}\in L^{2}(\overline{\Omega}_{\eta},L(\mu_{\eta}))$ and, for $0\leq s<t\leq 1$;\\

 $\int_{\overline{\Omega}_{\eta}}(I_{t}^{2}-I_{s}^{2})d L(\mu_{\eta})\leq C(t-s)$\\

where $C\in\mathcal{R}_{\geq 0}$\\

$(vii)$ $(UI)$ For $a.a.s$, $0\leq s<1$ and sufficiently small $h>0$, ${[I]_{s+h}-[I]_{s}\over h}$ is strongly uniformly integrable in the sense that that there exists $f:\mathcal{R}\rightarrow\mathcal{R}$, with $f\geq 0$ and $lim_{x\rightarrow\infty}f(x)=0$ such that, for $K>0$, $K\in\mathcal{R}$;

$\int_{{[I]_{s+h}-[I]_{s}\over h}>K}{[I]_{s+h}-[I]_{s}\over h}dL(\mu_{\eta})<f(K)$.\\

where $[I]$ denotes the quadratic variation of the process $I$.

}).

\end{theorem}

\begin{proof}
Let $I'$ be the modification of $I$, as given in the proof of  Theorem 7.25 of \cite{dep}. Then $I'$ and agree $I$ on $[0,1]\times C$, where $P(C)=1$, and $P=L(\mu_{\eta})$, so they are equivalent as stochastic processes. We show that $I'$ is a martingale.\\

 $(i)$ follows from the fact that $I$ is $\mathfrak{B}\times\mathfrak{D}$ measurable, and $I=I'$ a.e $\mu\times L(\mu_{\eta})$, $(*)$. Here, completeness of the product is required.\\

 $(ii)$. By the construction in the proof of Theorem 7.25 of \cite{dep}, $I'_{t}$ is measurable with respect to $\mathfrak{D}_{t}'\subset\mathfrak{D}_{t}$.\\

 $(iii)$. We have, for $t\in [0,1]$;\\

 $\int_{\overline{\Omega_{\eta}}}{I'}^{2}(t,x) d L(\mu_{\eta})=\int_{\overline{\Omega_{\eta}}}I^{2}(t,x) d L(\mu_{\eta})$\\

 $=\int_{\overline{\Omega_{\eta}}}{^{\circ}F}^{2}(t,x) d \mu_{\eta}$\\

 $\leq {^{\circ}\int_{\overline{\Omega_{\eta}}}F^{2}(t,x) d \mu_{\eta}}$\\

 $={^{\circ}\int_{\overline{\Omega_{\eta}}}\int_{0}^{t}f^{2}(t,x) d\lambda_{\nu} d\mu_{\eta}}=||g||^{2}_{L^{2}([0,t]\times\overline{\Omega_{\eta}})}$ $(\dag)$\\

 using $(*)$, Definition 7.20, (see notation in Theorem 7.24), Theorem 3.16 and the proof of Theorem 7.22 in \cite{dep}. Hence $I'_{t}\in L^{2}(\overline{\Omega_{\eta}},\mathcal{C}_{\eta},P)$, so $I'_{t}\in L^{1}(\overline{\Omega_{\eta}},\mathcal{C}_{\eta},P)$, by Holder's inequality, see \cite{Rud}.\\

 $(iv)$.  Suppose $s<t$. We first show that $E(I'_{t}|\mathfrak{D}_{s}')=I'_{s}$, $(\dag\dag)$. Suppose $i\in{^{*}{\mathcal N}}$, with ${i\over\nu}\simeq s$, then we claim that $E(I'_{t}|\sigma(\mathcal{C}_{\eta}^{i})^{comp})=I'_{s}$, $(**)$. As $I_{t}=I'_{t}$ a.e $P$, we have $E(I_{t}'|\sigma(\mathcal{C}_{\eta}^{i})^{comp}))=E(I_{t}|\sigma(\mathcal{C}_{\eta}^{i})^{comp}))$. We can also see that $F_{t}\in SL^{2}(\overline{\Omega_{\eta}},\mathcal{C}_{\eta},\mu_{\eta})$. This follows from the calculation $(\dag)$, Theorem 3.34(i) of \cite{dep}, and the fact that;\\

$\int_{\overline{\Omega_{\eta}}}I^{2}(t,x) d L(\mu_{\eta})=||g||^{2}_{L^{2}([0,t]\times\overline{\Omega_{\eta}})}$\\

by Ito's isometry, as $g\in{\mathcal G}_{0}$. Hence, by Theorem 3.34(iv) of \cite{dep}, $F_{t}\in SL^{1}(\overline{\Omega_{\eta}},\mathcal{C}_{\eta},\mu_{\eta})$, $(***)$. Applying Theorem 7.3(ii) of \cite{dep} and $(***)$;\\

$E(I_{t}|\sigma(\mathcal{C}_{\eta}^{i})^{comp})=E({^{\circ}F_{t}}|\sigma(\mathcal{C}_{\eta}^{i})^{comp})={^{\circ}E(F_{t}|\mathcal{C}_{\eta}^{i})}$\\

We have;\\

$E(F_{t}|\mathcal{C}_{\eta}^{i})=\sum_{j=0}^{i-1}f({j\over\nu},x){\omega_{j+1}\over\sqrt{\nu}}$\\

by ${^{*}}$-independence of the sequence $\{\omega_{j}\}_{0\leq j\leq [\nu t]+1}$. Letting $s'={i-1\over\nu}$, so $s'\simeq s$, $E(F_{t}|\mathcal{C}_{\eta}^{i})=F_{s'}$. We have, using Theorem 7.24 of \cite{dep}, that $I_{s}=I_{s'}$ a.e $P$, so $I'_{s}=I_{s}=I_{s'}$ a.e $P$. As $I'_{s}$ is $\sigma(\mathcal{C}_{\eta}^{i})^{comp}$-measurable, we have $E(I'_{t}|(\mathcal{C}_{\eta}^{i})^{comp})=I'_{s}$, showing $(**)$. As $\mathfrak{D}'_{s}\subset \sigma(\mathcal{C}_{\eta}^{i})^{comp}$, and $I'_{s}$ is $\mathfrak{D}'_{s}$-measurable, we have $E(I'_{t}|\mathfrak{D}'_{s})=I'_{s}$, showing $(\dag\dag)$.\\

If $A\in{\mathfrak D}_{s}$, then, by Lemma 7.15(i) of \cite{dep}, $A\in{\mathfrak D}'_{s_{1}}$, for $s<s_{1}<t$. As $E(I'_{t}|\mathfrak{D}'_{s_{1}})=I'_{s_{1}}$, to show $(iv)$, it is sufficient to prove that;\\

$\int_{A}I'_{s} d L(\mu_{\eta})=lim_{s_{1}\rightarrow s}\int_{A}I'_{s_{1}} d L(\mu_{\eta})$ $(\dag\dag\dag)$\\

To show $(\dag\dag\dag)$, observe that $||I'_{s_{1}}-I'_{s}||_{2}^{2}\leq ||g_{[0,s_{1}]}-g_{[0,s]}||_{2}^{2}$ by $(\dag)$, where $g_{[0,s_{1}]}$ is obtained by truncating the function $g$ to the interval $[0,s_{1}]$, (\footnote{Technically, you need to show that $I_{s_{1}}$ is the non standard stochastic integral of $g_{[0,s_{1}]}$, and then apply Theorem 7.22 of \cite{dep}, however, this is clear by truncating the corresponding lift of $g$.}). Using Holder's inequality and the DCT, we have $lim_{s_{1}\rightarrow s}||I'_{s_{1}}-I'_{s}||_{1}\leq lim_{s_{1}\rightarrow s}||g_{[0,s_{1}]}-g_{[0,s]}||_{1}=0$. Therefore, $(\dag\dag\dag)$ is shown. This proves $(iv)$.\\

$(v)$. This is Theorem 25 of \cite{A}.\\

\end{proof}

We proceed to show the converse, that every martingale can be represented as a stochastic integral, using the nonstandard approach.

\begin{lemma}
\label{filtrationbasis}
For $0\leq l\leq\nu$, a basis of the $*$-finite vector space $W(\mathcal{C}_{\eta}^{l})$ is given by $D_{l}=\bigcup_{0\leq m\leq l}B_{m}$, where, for $1\leq m\leq \nu$, $B_{m}=\{\omega_{\overline{t}}:\overline{t}(m)=1,\overline{t}(m')=-1,m<m'\leq\nu\}$, and $B_{0}=\{\omega_{\overline{-1}}\}$.
\end{lemma}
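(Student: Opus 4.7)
My plan is to verify three things: that $D_l \subseteq W(\mathcal{C}_\eta^l)$, that $D_l$ is $*$-linearly independent, and that $|D_l|$ equals the $*$-dimension of $W(\mathcal{C}_\eta^l)$. A dimension count then forces $D_l$ to be a basis.

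To see measurability, I would inspect the definition $\omega_{\overline{t}} = \prod_j \omega_j^{(\overline{t}(j)+1)/2}$. For $\overline{t} \in B_m$ with $1 \leq m \leq l$, the exponent vanishes at every $j > m$ (since $\overline{t}(j) = -1$ there), so $\omega_{\overline{t}}$ reduces to an internal product of $\omega_j$'s with $j \leq m \leq l$, each of which is $\mathcal{C}_\eta^l$-measurable. For $B_0 = \{\omega_{\overline{-1}}\}$ the function is the constant $1$. Hence every element of $D_l$ lies in $W(\mathcal{C}_\eta^l)$.

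For the cardinality step, I would note that, for $1 \leq m \leq l$, an element of $B_m$ is determined by a free internal choice of $\overline{t}(j) \in \{1,-1\}$ for $1 \leq j < m$ (the values at $m$ and at $j > m$ being prescribed), so $|B_m| = 2^{m-1}$. Summing, $|D_l| = 1 + \sum_{m=1}^l 2^{m-1} = 2^l$, by transfer of the standard geometric-sum identity. On the other hand, the partition of $\overline{\Omega}_\eta$ underlying $\mathcal{C}_\eta^l$ has exactly $2^l$ atoms, so $W(\mathcal{C}_\eta^l)$, viewed as the space of internal functions constant on those atoms, has $*$-dimension $2^l$ in the sense of the footnote to Definition \ref{prob}.

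Linear independence will come for free from Lemma \ref{nsindbasis}: the full family $\{\omega_{\overline{t}} : \overline{t} \in C_\nu\}$ is orthogonal with respect to $\mu_\eta$ and consists of nonzero vectors, so any subfamily, including $D_l$, is $*$-linearly independent. Combining the three steps, $D_l$ is a $*$-linearly independent subset of $W(\mathcal{C}_\eta^l)$ of the correct $*$-dimension, and transfer of the standard fact ``a linearly independent set of maximum size in a finite-dimensional vector space is a basis'' finishes the proof. The only mildly delicate point is that $l$ may be non-standard, so $2^l$ need not be a finite natural number; but every construction and relation involved is internal, so the transfer principle applies uniformly and I do not expect a real obstacle beyond careful bookkeeping.
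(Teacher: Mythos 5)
Your proposal is correct and follows essentially the same route as the paper: measurability of each $\omega_{\overline{t}}\in D_{l}$ with respect to $\mathcal{C}_{\eta}^{l}$ (via the observation that the exponents kill all factors $\omega_{j}$ with $j>m$), linear independence inherited from the orthogonality in Lemma \ref{nsindbasis}, and the dimension count $Card(D_{l})=2^{l}=\dim W(\mathcal{C}_{\eta}^{l})$. You are somewhat more explicit than the paper in computing $Card(B_{m})=2^{m-1}$ and summing the geometric series, but the argument is the same.
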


\begin{proof}
The case when $l=0$ is clear as $\omega_{\overline{-1}}=1$, and using the description of $\mathcal{C}_{\eta}^{0}$ in Definition \ref{filtration}. Using the observation $(*)$ there, we have, for $1\leq l\leq\nu$, that $W(\mathcal{C}_{\eta}^{l})$ is a $*$-finite vector space of dimension $2^{l}$. Using Lemma \ref{nsindbasis}, and the fact that $Card(D_{l})=2^{l}$, it is sufficient to show each $\omega_{\overline{t}}\in D_{l}$ is measurable with respect to $\mathcal{C}_{\eta}^{l}$. We have that, for $1\leq j\leq l$, $\omega_{j}$ is measurable with respect to $\mathcal{C}_{\eta}^{j}\subseteq \mathcal{C}_{\eta}^{l}$. Hence, the result follows easily, by transfer of the result for finite measure spaces, that the product $X_{j_{1}}X_{j_{2}}$, of two measurable random variables $X_{j_{1}}$ and $X_{j_{2}}$ is measurable.

\end{proof}

\begin{defn}
\label{mart}
We define a nonstandard martingale to be a ${\mathcal{D}}_{\nu}\times{\mathcal{C}}_{\eta}$-measurable function $Y:{\overline{\mathcal{T}_{\nu}}}\times{\overline{\Omega}_{\eta}}\rightarrow{^{*}\mathcal{C}}$, such that;\\

$(i)$. For $t\in\overline{\mathcal{T}_{\nu}}$, $Y_{{[\nu t]\over\nu}}$ is measurable with respect to $\mathcal{C}_{\eta}^{[\nu t]}$.\\

$(ii)$. $E_{\eta}(Y_{[\nu t]\over \nu}|\mathcal{C}_{\eta}^{[\nu s]})=Y_{{[\nu s]\over \nu}}$, for $(0\leq s\leq t\leq 1)$.\\

$(iii)$. $E_{\eta}(|Y_{[\nu t]\over \nu}|)$ is finite.\\

We say that $Y$ is $S$-continuous, if there exists $C\subset {\overline{\Omega}_{\eta}}$ with $L(\mu_{\eta})(C)=1$, such that for $x\in C$, $Y(t,x)\simeq Y(s,x)$, when $s\simeq t$, and each $Y(t,x)$ is near standard. We say that $Y$ has infinitesimal increments if, for all $x\in{\overline{\Omega}_{\eta}}$, and $t\in{\overline{\mathcal{T}_{\nu}}}$, $t\neq 1$, $Y({[t\nu]+1\over \nu},x)\simeq Y({[t\nu]\over \nu},x)$.\\

\end{defn}

\begin{lemma}
\label{rep}
Let $Y:\overline{\mathcal{T}_{\nu}}\times\overline{{\Omega}_{\eta}}\rightarrow{^{*}\mathcal{R}}$ be a $\mathcal{D}_{\nu}\times\mathcal{C}_{\eta}$-measurable function, satisfying $(i)$ and $(ii)$ of Definition \ref{mart}, then;\\

$Y_{t}(x)=\sum_{j=0}^{[\nu t]}c_{j}(t,x)\omega_{j}(x)$ $(*)$\\

where $c_{0}:[0,1]\times\overline{{\Omega}_{\eta}}\rightarrow{^{*}\mathcal{C}}$ is $\mathcal{D}_{\nu}\times\mathcal{C}_{\eta}^{0}$-measurable, $c_{j}:[{j\over\nu},1]\times\overline{{\Omega}_{\eta}}\rightarrow{^{*}\mathcal{C}}$ is $\mathcal{D}_{\nu}\times\mathcal{C}_{\eta}^{j-1}$-measurable, for $1\leq j\leq \nu$, and $c_{0}(s,x)=c_{0}(t,x)$, for $0\leq s\leq t\leq 1$, $c_{j}(s,x)=c_{j}(t,x)$, for ${j\over\nu}\leq s\leq t\leq 1$. Conversely, if $\{c_{j}:0\leq j\leq \nu\}$ is a collection of functions satisfying the above conditions, then the definition $(*)$ produces a $\mathcal{D}_{\nu}\times\mathcal{C}_{\eta}$-measurable function, satisfying $(i)$ and $(ii)$ of Definition \ref{mart}.

\end{lemma}

\begin{proof}
Using $(ii)$, we have that $E_{\eta}(Y_{t})=E_{\eta}(Y_{t}|\mathcal{C}_{\eta}^{0})=Y_{0}$. Replacing $Y_{t}$ by $Y_{t}-Y_{0}$, we can, without loss of generality, assume that $E_{\eta}(Y_{t})=0$, for $t\in {{^{*}}[0,1]}$. By $(i)$ and Lemma \ref{filtrationbasis};\\

$Y_{t}=\sum_{j=1}^{[\nu t]}c_{j}(t,x)\omega_{j}(x)$\\

where;\\

$c_{j}(t,x)=\sum_{a=0}^{j-1}\sum_{i_{0}<\ldots<i_{a};0}^{j-1}p_{j}^{(i_{0},\ldots,i_{a})}(t)\omega_{i_{0}}\ldots\omega_{i_{a}}(x)$\\

Clearly, $c_{j}$ is $\mathcal{D}_{\nu}\times \mathcal{C}_{\eta}^{j-1}$-measurable.
Again, using $(ii)$, and the fact that $c_{k}(t,x)\omega_{k}$ is orthogonal to the basis $D_{[\nu s]}$ of $W(\mathcal{C}_{\eta}^{[\nu s]}$, for $[\nu s]<k\leq [\nu t]$, $(\dag)$, we have;\\

$\sum_{j=1}^{[\nu s]}c_{j}(t,x)\omega_{j}(x)=\sum_{j=1}^{[\nu s]}c_{j}(s,x)\omega_{j}(x)$\\

Equating coefficients, and using the fact that $D_{j}$ is a basis for $W(\mathcal{C}_{\eta}^{j})$, for $1\leq j\leq [\nu s]$, we obtain $c_{j}(s,x)=c_{j}(t,x)$, for all ${j\over\nu}\leq s\leq t\leq 1$.\\

 The converse is easy to check. $(i)$ is obtained, observing that for $t\in\mathcal{T}_{\nu}$, all the functions $c_{j,t}$ and $\omega_{j}$ are measurable with respect to $\mathcal{C}_{\eta}^{[\nu t]}$, for $0\leq j\leq [\nu t]$. To obtain $(ii)$, just take the conditional expectation of $(*)$ and make the observation $\dag$ again.\\

\end{proof}

\begin{lemma}
\label{lift}
Let $X$ be a martingale, see footnote \ref{stmart} for the definition, with the extra condition that $X_{1}\in L^{2}(\overline{\Omega}_{\eta})$, then there exists a nonstandard martingale $\overline{X}$, see Definition \ref{mart},  with ${^{\circ}}({\overline{X}}_{t})=X_{^{\circ}t}$, for $t\in{\overline{\mathcal{T}_{\nu}}}$, a.e $L(\mu_{\eta})$, and such that the sequence $\{\overline{X}_{i\over \nu}:0\leq i\leq \nu\}\subset SL^{2}(\overline{\Omega_{\eta}},\mu_{\eta})$. Moreover, $\overline{X}$ is $S$-continuous, and we can take $\overline{X}$ to have infinitesimal increments.
\end{lemma}

\begin{proof}
By $(i)$ of footnote \ref{stmart}, we have $X$ is $\mathfrak{B}\times\mathfrak{D}$-measurable. We claim that $X\in L^{1}([0,1]\times\overline{\Omega_{\eta}})$, $(*)$. Without loss of generality, we can assume that $X\geq 0$, (\footnote{\label{positive} In order to see this, it is sufficient to show that $X^{+}$ is a martingale, $(*)$. We have $X=X^{+}-X^{-}$, and, by $(iv)$, for $0\leq t\leq 1$;\\

$X_{t}=X_{t}^{+}-X_{t}^{-}=E(X_{1}|\mathfrak{D}_{t})=E(X_{1}^{+}-X_{1}^{-}|\mathfrak{D}_{t})=Y_{t}-Y'_{t}$ $(**)$\\

where $Y_{t}=E(X_{1}^{+}|\mathfrak{D}_{t})$ and $Y'_{t}=E(X_{1}^{-}|\mathfrak{D}_{t})$. It follows easily, modifying $Y$ to $Y^{1}$, and $Y'$ to $Y^{',1}$, a.e $L(\lambda_{\nu})\times L(\mu_{\eta})$, if necessary, and, using the tower law and definition of conditional expectations, see \cite{wil}, that $Y,Y'$ are martingales and $Y,Y'\geq 0$. We then have, by $(**)$, that $X_{t}^{+}=Y_{t}$ and $X_{t}^{-}=Y'_{t}$ a.e $L(\mu_{\eta})$. Hence, $(*)$ is shown.})
Then $(*)$ follows from the fact that, for $0\leq t\leq 1$, $E(X_{t})=E(X_{t}|\mathfrak{D}_{0})=X_{0}$, by $(iv)$ of footnote \ref{stmart}, and so;\\

$\int_{[0,1]\times\overline{\Omega_{\eta}}}X(t,x)d(L(\lambda_{\nu})\times L(\mu_{\eta}))=X_{0}<\infty$\\

by $(iii)$ of footnote \ref{stmart} and Fubini's theorem, see \cite{Rud}. By the hypothesis that $X_{1}\in L^{2}(\overline{\Omega}_{\eta})$, and using Theorem 7 of \cite{A}, see also Theorems 3.31 and 3.34 of \cite{dep}, we can find $\overline{V}\in SL^{2}(\overline{\Omega_{\eta}},\mu_{\eta})$, with $({^{\circ}\overline{V}})=X_{1}$, a.e $L(\mu_{\eta})$, $(\dag)$. We now define $\overline{X}:\overline{T}_{\nu}\times\overline{\Omega}_{\eta}\rightarrow{^{*}}\mathcal{C}$ by taking $\overline{X}(t,x)=(E_{\eta}(\overline{V}|\mathcal{C}_{\eta}^{[\nu t]}))(x)$. We may assume that $\overline{X}$ is $\mathcal{D}_{\nu}\times\mathcal{C}_{\eta}$ measurable, by the definition of $E_{\eta}( | )$, see footnote 25 of Chapter 7, \cite{dep}, and transfer of the corresponding result for finite measure spaces. Then, by Theorem 7.3 of \cite{dep};\\

$(^{\circ}\overline{X})(t,x)={^{\circ}}(E_{\eta}(\overline{V}|\mathcal{C}_{\eta}^{[\nu t]}))(x)=E((^{\circ}\overline{V})|\sigma(\mathcal{C}_{\eta}^{[\nu t]})^{comp})$ $(**)$\\

Moreover, if $A\in\sigma(\mathcal{C}_{\eta}^{[\nu t]})^{comp}$, we have;\\

$\int_{A}X_{^{\circ}t}d L(\mu_{\eta})=lim_{t'\rightarrow {^{\circ}t}}\int_{A}X_{t'}d L(\mu_{\eta})=\int_{A}X_{1}d L(\mu_{\eta})$ $(***)$\\

using $(iv)$,$(v)$ of footnote \ref{stmart} and the result of $(*)$ to apply the $DCT$. Hence, as $\mathfrak{D}_{^{\circ}t}\subset \sigma(\mathcal{C}_{\eta}^{[\nu t]})^{comp}\subset\mathfrak{D}_{t'}$, for $0\leq {^\circ{t}}<t'$, using $(**)$ in Definition \ref{filtration}, we have;\\

$E((^{\circ}\overline{V})|\sigma(\mathcal{C}_{\eta}^{[\nu t]})^{comp})=E((^{\circ}\overline{V})|\mathfrak{D}_{^{\circ}t})=E(X_{1}|\mathfrak{D}_{^{\circ}t})=X_{^{\circ}t}$\\

by $(***)$, $(\dag)$ and $(iv)$ of footnote \ref{stmart}. By $(**)$, we then have $({^{\circ}}\overline{X}_{t})=X_{^{\circ}t}$, a.e $L(\mu_{\eta})$. We now verify conditions $(i),(ii),(iii)$ of Definition \ref{mart}. $(i)$ is clear by Definition of $\overline{X}$ and footnote 25 of Chapter 7, \cite{dep}. $(ii)$ follows by transfer of the tower law for the conditional expectation $E_{\eta}(  |  )$, see again footnote 25 of Chapter 7. $(iii)$ follows immediately from the fact that $\overline{V}\in SL^{2}(\overline{\Omega_{\eta}},\mu_{\eta})$, and;\\

$|E_{\eta}(\overline{X}_{t})|= |E_{\eta}(\overline{V})|\leq E_{\eta}(|\overline{V}|)\leq ||\overline{V}||_{SL^{2}}\simeq ||X_{1}||_{L^{2}}<\infty$ (for $t\in\mathcal{T}_{\nu}$)\\

by transfer of Holders inequality, the definition of $E_{\eta}(  |  )$, and property $(ii)$ in Definition \ref{mart}. Finally, using Theorem 7.3 of \cite{dep}, we have that the sequence $\{\overline{X}_{i\over \nu}:0\leq i\leq \nu\}\subset SL^{2}(\overline{\Omega_{\eta}},\mu_{\eta})$. The $S$-continuity claim follows from the proof of Theorem 8.1 in \cite{HP2}. We omit the details. For the final claim, we modify $\overline{X}$ to obtain the final condition, while preserving the other properties. For $n\in{{^{*}}\mathcal{N}}$, we let;\\

$V_{n}=\{x:\exists t(|\Delta\overline{X}(t,x)|\geq {1\over n})\}$, (\footnote{We use the notation $\Delta\overline{X}(t,x)$ to denote the increment $\overline{X}(t+{1\over\nu},x)-\overline{X}(t,x)$, for $0\leq t\leq 1-{1\over\nu}$})\\

By $S$-continuity of $\overline{X}$, we have that the internal set $A=\{n\in{{^{*}}\mathcal{N}}:\mu_{\eta}(V_{n})\leq {1\over n}\}$ contains $\mathcal{N}$, hence, it contains an infinite element $\kappa$. For $x\in V_{\kappa}$, we let $\tau(x)$ be the first $t$ such that $|\Delta\overline{X}(t,x)|\geq {1\over\kappa}$ and let $\tau(x)=1$ otherwise. We let $\overline{W}$ be the internal process defined by;\\

$\overline{W}_{0}=\overline{X}_{0}$\\

$\Delta\overline{W}(x,t)=\Delta\overline{X}(x,t)$, if $t<\tau(x)$.\\

$\Delta\overline{W}(x,t)=0$, if $t\geq \tau(x)$.\\

We claim that $\overline{W}$ is a nonstandard martingale in the sense of Definition \ref{mart}. For $(i)$, by hyperfinite induction, and the fact that $\overline{W}_{0}=\overline{X}_{0}$, it is sufficient to show that if $\overline{W}_{i-1\over\nu}$ is measurable with respect to $\mathcal{C}_{\eta}^{i-1}$, then $\overline{W}_{i\over\nu}$ is measurable with respect to $\mathcal{C}_{\eta}^{i}$, for $1\leq i\leq \nu$, $(\dag\dag)$. If $x\sim_{i} x'$, we have that ${i-1\over \nu}<\tau(x)$ iff ${i-1\over\nu}<\tau(x')$, as this is an internal definition depending only on information up to time ${i\over\nu}$, hence must contain the equivalence class $[x]_{\sim_{i}}$. In this case, we have that $\overline{W}(x,{i\over\nu})=\overline{W}(x,{i-1\over\nu})+\Delta\overline{X}(x,{i-1\over\nu})$, which is constant on $[x]_{\sim_{i}}$, using the inductive hypothesis and measurability of $\overline{X}$. The case when ${i-1\over \nu}\geq \tau(x)$ is similar. Hence, $(\dag\dag)$ and $(i)$ are shown. For $(ii)$, it is sufficient to show that if $x\in\overline{\Omega}_{\eta}$, then;\\

 $\int_{[x]_{\sim_{i-1}}}\overline{W}_{i-1\over \nu}d\mu_{\eta}=\int_{[x]_{\sim_{i-1}}}\overline{W}_{i\over \nu}d\mu_{\eta}$, for $1\leq i\leq \nu$ $(\dag\dag\dag)$\\

 Clearly, if ${i-1\over\nu}\geq \tau(x')$, for all $x'\in [x]_{\sim_{i-1}}$, then $\Delta\overline{W}(x,{i-1\over\nu})|_{[x]_{\sim_{i-1}}}=0$, and the result $(\dag\dag\dag)$ follows trivially. Similarly, if ${i-1\over\nu}<\tau(x')$, for all $x'\in [x]_{\sim_{i-1}}$, then $\overline{W}|_{[x]_{\sim_{i-1}}\times [{i-1\over \nu},{i+1\over \nu})}=\overline{X}|_{[x]_{\sim_{i-1}}\times [{i-1\over \nu},{i+1\over \nu})}$, and the result $(\dag\dag\dag)$ follows from the martingale property of $\overline{X}$. We can, therefore, write $[x]_{\sim_{i-1}}=[x_{1}]_{\sim_{i}}\cup [x_{2}]_{\sim_{i}}$, and assume that ${i-1\over \nu}<\tau(x')$, for all $x'\in [x_{1}]_{\sim_{i}}$, and ${i-1\over \nu}\geq \tau(x')$, for all $x'\in [x_{2}]_{\sim_{i}}$. If ${i-2\over \nu}\geq \tau(x')$, for all $x'\in [x_{2}]_{\sim_{i}}$, then the same must hold for all $x'\in [x_{1}]_{\sim_{i}}$, contradicting the assumption. Hence, we can also assume that ${i-2\over \nu}<\tau(x')$, for all $x'\in [x_{2}]_{\sim_{i}}$. It follows that $|\Delta\overline{X}(x,{i-1\over\nu})|_{[x_{1}]_{\sim_{i}}}|\leq {1\over\kappa}$ and $|\Delta\overline{X}(x,{i-1\over\nu})|_{[x_{2}]_{\sim_{i}}}|> {1\over\kappa}$, but this contradicts the martingale property $(\dag\dag\dag)$ for $\overline{X}$. Hence, this case can't happen, so $(\dag\dag\dag)$ and $(ii)$ is shown. Property $(iii)$ follows from the fact that $\overline{W}_{1}\in SL^{2}(\overline{\Omega}_{\eta})$, $(\dag\dag\dag\dag)$, which we show below, and the inequality;\\

 $E_{\eta}(|\overline{W}_{\nu t\over \nu}|)\leq E_{\eta}(\overline{W}_{\nu t\over \nu}^{2})^{1\over 2}\leq E_{\eta}(\overline{W}_{1}^{2})^{1\over 2}$.\\

which uses Cauchy-Schwartz, and the martingale property $(ii)$. By construction $\overline{W}$ has infinitesimal increments. As we are only modifying $\overline{X}$ inside $V_{\kappa}\times {\mathcal T}_{\nu}$, where $L(\mu_{\eta})(V_{\kappa})=0$ it is clear that $S$-continuity is preserved. Similarly, we must have that ${^{\circ}}({\overline{W}}_{t})=X_{^{\circ}t}$, for $t\in{\overline{\mathcal{T}_{\nu}}}$, a.e $L(\mu_{\eta})$. It remains to show $(\dag\dag\dag\dag)$. By the above remark on modification, it is sufficient to show that $\int_{V_{\kappa}}\overline{W}_{1}^{2}d\mu_{\eta}\simeq 0$. We can define a relation on $\overline{\Omega}_{\eta}$ by $x\sim x'$ if $x'\in [x]_{\tau(x)-1}$. If $x\sim x'$, then, by the above discussion, $\tau(x)=\tau(x')$, and so $\sim$ defines an equivalence relation. We clearly have that $V_{\kappa}=\bigcup_{1\leq j\leq r}[x_{j}]_{\sim}$ is an internal union of such equivalence classes. A simple calculation gives that;\\

$\int_{V_{\kappa}}\overline{W}_{1}^{2}d\mu_{\eta}={{^{*}}\sum}_{1\leq j\leq r}\int_{[x_{j}]_{\sim}}\overline{W}_{1}^{2}d\mu_{\eta}$\\

$={{^{*}}\sum}_{1\leq j\leq r}\int_{[x_{j}]_{\tau(x_{j})-1}}\overline{X}_{\tau(x_{j})-1}^{2}d\mu_{\eta}$\\

$\leq {{^{*}}\sum}_{1\leq j\leq r}\int_{[x_{j}]_{\tau(x_{j})-1}}\overline{X}_{1}^{2}d\mu_{\eta}$\\

$=\int_{V_{\kappa}}\overline{X}_{1}^{2}d\mu_{\eta}\simeq 0$\\

where we have used the definition of $\overline{W}$, and the calculation of Theorem 12(ii) in \cite{A}. This gives the result.\\

\end{proof}

\begin{lemma}
\label{tame}
Let $X$ be a tame martingale, and let $\overline{X}$ be as in Lemma \ref{lift}. Then we can find $\kappa\in {{{^{*}}\mathcal{N}}\setminus\mathcal{N}}$ such that $\kappa|\nu$, and for all $t\in\mathcal{T}_{\nu}$;\\

$\int_{\overline{\Omega}_{\eta}}(\overline{X}_{t+{1\over\kappa}}^{2}-\overline{X}_{t}^{2})d\mu_{\eta}\leq {C+1\over\kappa}$\\

where $C\in\mathcal{R}_{\geq 0}$ is as given in footnote \ref{stmart}. Moreover, we can find $D\subset\overline{\Omega}_{\eta}$, with $\mu_{\eta}(D)\simeq 1$, $E\subset\mathcal{T}_{\nu}$ with $\mu_{\eta}(E)\simeq 0$, such that for all $t\in {\mathcal{T}_{\nu}\setminus E}$;\\

$1_{D}\kappa([\overline{X}]_{t+{1\over\kappa}}-[\overline{X}]_{t})\in SL^{1}(\overline{\Omega}_{\eta},\mu_{\eta})$\\

\end{lemma}

\begin{proof}
Without loss of generality we can assume that $n|\nu$, for all $n\in\mathcal{N}$. If $t\in\mathcal{T}_{\nu}$ and $n\in\mathcal{N}$, we have that $\{\overline{X}_{t},\overline{X}_{t+{1\over n}}\}\subset SL^{2}(\overline{\Omega}_{\eta},\mu_{\eta})$, hence $(\overline{X}_{t+{1\over n}}^{2}-\overline{X}_{t}^{2})\in SL^{1}(\overline{\Omega}_{\eta},\mu_{\eta})$. We, therefore, have that;\\

${^{\circ}}(\int_{\overline{\Omega}_{\eta}}(\overline{X}_{t+{1\over n}}^{2}-\overline{X}_{t}^{2})d\mu_{\eta})$\\

$=\int_{\overline{\Omega}_{\eta}}({^{\circ}}(\overline{X}_{t+{1\over n}})^{2}-{^{\circ}}(\overline{X}_{t})^{2})d L(\mu_{\eta})$\\

$=\int_{\overline{\Omega}_{\eta}}(X_{{^{\circ}}t+{1\over n}}^{2}-X_{{^{\circ}}t}^{2})d L(\mu_{\eta})\leq {C\over n}$\\

It follows that;\\

$\int_{\overline{\Omega}_{\eta}}(\overline{X}_{t+{1\over n}}^{2}-\overline{X}_{t}^{2})d\mu_{\eta}\leq {C+1\over n}$\\

As this holds for all $n\in\mathcal{N}$, and the property is internal, we can find an infinite $\kappa|\nu$, such that;\\

$\int_{\overline{\Omega}_{\eta}}(\overline{X}_{t+{1\over \kappa}}^{2}-\overline{X}_{t}^{2})d\mu_{\eta}\leq {C+1\over \kappa}$\\

for all $t\in\mathcal{T}_{\nu}$, as required, for the first part.\\

For the second condition, using Proposition 4.4.12 in \cite{AFKL}, we can assume that there exists $C\subset\overline{\Omega}_{\eta}$, with $L(\mu_{\eta})(C)= 1$, such that $[\overline{X}]$ lifts the standard process $[X]$ on $C\times\mathcal{T}_{\nu}$. For ease of notation, for $m\in\mathcal{N}$ and $t\in\mathcal{T}_{\nu}$, let $[\overline{X}]_{t,m}$ denote the increment $m([\overline{X}]_{t+{1\over m}}-[\overline{X}]_{t})$ and $[X]_{t,m}$ the corresponding standard increment, for $t\in [0,1]$. We clearly have that ${^{\circ}[\overline{X}]_{t,m}}=[X]_{{^{\circ}}t,m}$ on $C\times\mathcal{T}_{\nu}$. Choose a sequence of $\{C_{m}:m\in\mathcal{N}\}$, with $C_{m}\subset C$, such that each $C_{m}\in\mathcal{C}_{\eta}$ and $\mu_{\eta}(C_{m})=1-{1\over m}$. As $C_{m}$ is internal and $[\overline{X}]$ lifts $X$ on $C_{m}\times\mathcal{T}_{\nu}$, by compactness, we must have that $[\overline{X}]$ is bounded on $C_{m}\times\mathcal{T}_{\nu}$, $|[\overline{X}]|\leq D(m)$, where $D(m)\in\mathcal{R}$. Let $V\subset [0,1]$ be the set on which the incremental condition $(vii)$ in Definition \ref{stmart} does not hold. Then $L(\lambda_{\nu})(st^{-1}(V))=0$, and we can choose $E_{m}\in\mathcal{C}_{\nu}$, with $\lambda_{\nu}(E_{m})={1\over m}$, such that $E_{m}\supset st^{-1}(V)$.  Then, we have that, for all $t\in{\mathcal{T}_{\nu}\setminus E_{m}}$, for all $K\leq 2D(m)m$, that;\\

${^{\circ}\int_{[\overline{X}]_{t,m}>K}1_{C_{m}}[\overline{X}]_{t,m}d\mu_{\eta}}$\\

$={^{\circ}\int_{\overline{\Omega}_{\eta}}1_{([\overline{X}]_{t,m}>K)\cap C_{m}}[\overline{X}]_{t,m}d\mu_{\eta}}$\\

$=\int_{\overline{\Omega}_{\eta}}1_{([\overline{X}]_{t,m}>K)\cap C_{m}}[X]_{^{\circ}t,m}dL\mu_{\eta}$\\

It follows that;\\

$\int_{[\overline{X}]_{t,m}>K}1_{C_{m}}[\overline{X}]_{t,m}d\mu_{\eta}$\\

$<\int_{[X]_{{^{\circ}t},m}>K-1}1_{C_{m}}[X]_{{^{\circ}t},m}dL(\mu_{\eta})+{1\over m}$\\

$<\int_{[X]_{{^{\circ}t},m}|>K-1}[X]_{{^{\circ}t},m}dL(\mu_{\eta})+{1\over m}$\\

$=f^{*}(K-1)+{1\over m}$\\

where we have used condition $(vii)$ in the definition from footnote \ref{stmart}. The condition $(*)$ holds trivially when $K>2D(m)m$, as then;\\

$\int_{[\overline{X}]_{t,m}>K}1_{C_{m}}[\overline{X}]_{t,m}d\mu_{\eta}=0$\\

It follows that;\\

${^{*}\mathcal{R}}\models (\forall t\in{\mathcal{T}_{\nu}\setminus E_{m}})(\forall K,)$\\

$\int_{[\overline{X}]_{t,m}>K}1_{C_{m}}[\overline{X}]_{t,m}d\mu_{\eta}<f^{*}(K-1)+{1\over m}$\\

for all sufficiently large $m\in\mathcal{N}$. By overflow, we can satisfy the condition for the same infinite $\kappa\in{^{*}\mathcal{N}}$ as above. In particular, we obtain, for infinite $K$, $t\in{\mathcal{T}_{\nu}\setminus E_{\kappa}}$ that;\\

$\int_{[\overline{X}]_{t,\kappa}>K}1_{C_{\kappa}}[\overline{X}]_{t,\kappa}d\mu_{\eta}<f^{*}(K-1)+{1\over \kappa}\simeq 0$\\

It follows, using the criterion in Lemma 3.19 of \cite{dep}, that $1_{C_{\kappa}}[\overline{X}]_{t,\kappa}\in SL^{1}(\overline{\Omega}_{\eta},\mu_{\eta})$, for all $t\in{\mathcal{T}_{\nu}\setminus E_{\kappa}}$ as required. Letting $D=C_{\kappa}$ $E=E_{\kappa}$ and noting that $\mu_{\eta}(D)=1-{1\over\kappa}\simeq 1$, $\mu_{\eta}(E)={1\over\kappa}\simeq 0$ we obtain the result.\\

\end{proof}

\begin{defn}
\label{integrand}
Let $\overline{X}$ be as in Definition \ref{mart}, with $E_{\eta}(\overline{X}_{0})=0$, and let $\{c_{j}(t,x):1\leq j\leq\nu\}$ be given as in Lemma \ref{rep}. Then we define;\\

 $\overline{H}:\overline{\mathcal{T}_{\nu}}\times\overline{\Omega_{\eta}}\rightarrow{^{*}{\mathcal C}}$, $\overline{Z}:\overline{\Omega_{\eta}}\rightarrow{^{*}{\mathcal C}}$, $\overline{Y}:\overline{\mathcal{T}_{\kappa}}\times\overline{\Omega_{\eta}}\rightarrow{^{*}{\mathcal C}}$,
 $\overline{W}:\overline{\mathcal{T}_{\kappa}}\times\overline{\Omega_{\eta}}\rightarrow{^{*}{\mathcal C}}$,
 $\{d_{j}(t,x):1\leq j\leq\nu\}$, $\overline{S}:\overline{\mathcal{T}_{\nu}}\times\overline{\Omega_{\eta}}\rightarrow{^{*}{\mathcal C}}$, $\overline{Q}:\overline{\Omega_{\eta}}\rightarrow{^{*}{\mathcal C}}$  by;\\

$\overline{H}(t,x)=\sqrt{\nu}c_{[\nu t]+1}(s,x)$\\

where $s\geq {[\nu t]+1\over \nu}$, for $0\leq t<1$ and;\\

$\overline{H}(t,x)=0$, for $t=1$\\

$\overline{Z}(x)={^{*}\sum}_{0\leq j\leq\nu-1}(\overline{X}_{j+1\over\nu}(x)-\overline{X}_{j\over\nu}(x))^{2}$\\

$\overline{Y}(t,x)=0$, for $0\leq [\nu t]<{\nu\over\kappa}-1$\\

$\overline{Y}(t,x)={k\over\nu}(\overline{H}^{2}_{[\nu t]\over\nu}+\overline{H}^{2}_{{[\nu t]-1\over\nu}}+\ldots+\overline{H}^{2}_{{[\nu t]-{\nu\over\kappa}+1\over\nu}})$, for ${\nu\over\kappa}-1\leq [\nu t]\leq 1$\\

$\overline{W}=\sqrt{\overline{Y}}$\\

$d_{j}(s,x)={1\over\sqrt{\nu}}\overline{W}_{j-1\over\nu}(x)$, for $1\leq j\leq \nu$, and ${j\over\nu}\leq s\leq 1$.\\

$\overline{S}(t,x)={{^{*}}\sum}_{j=1}^{[\nu t]}d_{j}(1,x)\omega_{j}$\\

$\overline{Q}(x)={^{*}\sum}_{0\leq j\leq\nu-1}(\overline{S}_{j+1\over\nu}(x)-\overline{S}_{j\over\nu}(x))^{2}$\\

\end{defn}

\begin{lemma}
\label{Sinteg}
If $\overline{X}$ is as in Lemma \ref{lift}, and $X$ is tame, then $\overline{Y}\in SL^{1}(\mathcal{T}_{\nu}\times\overline{\Omega}_{\eta},\lambda_{\nu}\times \mu_{\eta})$, $\overline{Z}\in SL^{1}(\overline{\Omega}_{\eta},\mu_{\eta})$ and $\overline{S}$ is a nonstandard martingale, with $\overline{S}_{1}\in SL^{2}(\overline{\Omega}_{\eta},\mu_{\eta})$.\\

\end{lemma}

\begin{proof}

The fact that $\overline{Z}\in SL^{1}(\overline{\Omega}_{\eta},\mu_{\eta})$, $(\dag)$, follows from Proposition 4.4.3 of \cite{AFKL} and the properties of $\overline{X}$. This does not require that $\overline{X}$ is $S$-continuous or has infinitesimal increments. \\

For the last claim, it is easily seen that the functions $d_{j}:[{j\over\nu},1]\times\overline{{\Omega}_{\eta}}\rightarrow{^{*}\mathcal{C}}$ are $\mathcal{D}_{\nu}\times\mathcal{C}_{\eta}^{j-1}$-measurable, for $1\leq j\leq \nu$. Hence, using Lemma \ref{rep}, we have that $\overline{S}$ satisfies conditions $(i)$ and $(ii)$ of Definition \ref{mart}. By Proposition 4.4.3 of \cite{AFKL}, it is sufficient to show that $\overline{Q}\in SL^{1}(\overline{\Omega}_{\eta},\mu_{\eta})$, as $\overline{S}_{0}=0$. (explain why we can assume this?) We compute;\\

$\overline{Q}(x)={^{*}\sum}_{0\leq j\leq\nu-1}(\overline{S}_{j+1\over\nu}(x)-\overline{S}_{j\over\nu}(x))^{2}$\\

$={^{*}\sum}_{1\leq j\leq\nu-1}d_{j}^{2}(1,x)$\\

$={1\over\nu}{^{*}\sum}_{1\leq j\leq\nu-1}\overline{W}_{j-1\over\nu}^{2}(x)$\\

$={1\over\nu}{^{*}\sum}_{1\leq j\leq\nu-1}\overline{Y}_{j-1\over\nu}(x)$\\

$={1\over\nu}{^{*}\sum}_{{\nu\over\kappa}-1\leq j\leq\nu-2}{k\over\nu}(\overline{H}^{2}_{j\over\nu}+\overline{H}^{2}_{{j-1\over\nu}}+\ldots+\overline{H}^{2}_{{j-{\nu\over\kappa}+1\over\nu}})$\\

$={1\over \nu}{^{*}\sum}_{0\leq j\leq \nu-1}\overline{H}^{2}_{{j\over\nu}}d\mu_{\eta}+r(x)$\\

$={^{*}\sum}_{1\leq j\leq \nu}c^{2}_{j}(1,x)d\mu_{\eta}+r(x)$\\

$={^{*}\sum}_{0\leq j\leq\nu-1}(\overline{X}_{j+1\over\nu}(x)-\overline{X}_{j\over\nu}(x))^{2}+r(x)$\\

$=\overline{Z}(x)+r(x)$\\

where $r(x)\geq 0$ is a remainder term. We have that $E_{\eta}(r(x))\simeq 0$, and $r(x)\leq \overline{Z}(x)$. It follows, easily, that $r(x)\simeq 0$, a.e $L(\mu_{\eta})$, $\overline{Q}(x)\simeq \overline{Z}(x)$ a.e $L(\mu_{\eta})$, and $\overline{Q}(x)\in SL^{2}(\overline{\Omega}_{\eta},\mu_{\eta})$ as required.\\

For the first part, observe first that $\overline{H}$ is progressively measurable, that is $\overline{H}_{t}$ is measurable with respect to $\mathcal{C}_{\eta}^{[\nu t]}$, hence, so is $\overline{Y}$.\\

By Lemma 3.19 of \cite{dep}, it is sufficient to prove that;\\

$\int_{\overline{Y}>K}\overline{Y}d(\lambda_{\nu}\times \mu_{\eta})\simeq 0$, for $K$ infinite\\

As $\overline{Y}$ is progressively measurable, the set $\overline{Y}>K$ is progressively measurable. Moreover, it has infinitesimal measure. This clearly follows from showing that;\\

$\int_{\mathcal{T}_{\nu}\times\overline{\Omega}_{\eta}}\overline{Y}d\lambda_{\nu}d\mu_{\eta}$ is finite, $(*)$\\

To see $(*)$, we compute;\\

$\int_{\mathcal{T}_{\nu}\times\overline{\Omega}_{\eta}}\overline{Y}(t,x)d\lambda_{\nu}d\mu_{\eta}$\\

$={1\over \nu}{^{*}\sum}_{0\leq j\leq \nu-1}\int_{\overline{\Omega}_{\eta}}\overline{Y}({j\over\nu},x)d\mu_{\eta}$\\

$={1\over \nu}{^{*}\sum}_{{\nu\over k}-1\leq j\leq \nu-1}\int_{\overline{\Omega}_{\eta}}\overline{Y}({j\over\nu},x)d\mu_{\eta}$\\

$={1\over \nu}{^{*}\sum}_{{\nu\over k}-1\leq j\leq \nu-1}\int_{\overline{\Omega}_{\eta}}({k\over\nu}(\overline{H}^{2}_{{j\over\nu}}+\overline{H}^{2}_{{j-1\over\nu}}+\ldots+\overline{H}^{2}_{{j-{\nu\over\kappa}+1\over\nu}}))d\mu_{\eta}$\\

$\leq {1\over \nu}{^{*}\sum}_{0\leq j\leq \nu}\int_{\overline{\Omega}_{\eta}}\overline{H}^{2}_{{j\over\nu}}d\mu_{\eta}$\\

$={1\over \nu}{^{*}\sum}_{0\leq j\leq \nu-1}\int_{\overline{\Omega}_{\eta}}\nu|c_{j}(1,x)|^{2}d\mu_{\eta}$ $(\dag\dag)$\\

$={^{*}\sum}_{0\leq j\leq \nu-1}\int_{\overline{\Omega}_{\eta}}|c_{j}(1,x)|^{2}d\mu_{\eta}$\\

$=\int_{\overline{\Omega}_{\eta}}|\overline{X}_{1}|^{2}d \mu_{\eta}$  $(\dag\dag\dag)$\\

where, in $(\dag\dag)$, we have used Definition \ref{integrand}, and, in $(\dag\dag\dag)$, we have used the fact that $\overline{X}_{1}={^{*}{\sum}}_{0\leq j\leq \nu-1}c_{j}(1,x)\omega_{j}$, by Lemma \ref{rep}, and the orthogonality observation $(*)$ there. Hence, $(*)$ is shown, by the assumption that $\overline{X}_{1}\in SL^{2}(\overline{\Omega}_{\eta},\mu_{\eta})$. Therefore, it is sufficient to prove that;\\

$\int_{A}\overline{Y}d(\lambda_{\nu}\times \mu_{\eta})\simeq 0$, for a progressively measurable set $A$ with $\lambda_{\nu}\times \mu_{\eta}(A)\simeq 0$. $(**)$ \\

We now verify $(**)$;\\

Case 1. Let $A\subset\overline{{\Omega}_{\eta}}$, with $\mu_{\eta}(A)\simeq 0$, then;\\

$\int_{A\times\overline{\mathcal T}_{\eta}}\overline{Y} d\mu_{\eta}d\lambda_{\nu}$\\

$={1\over\nu}{^{*}\sum}_{0\leq j\leq \nu-1}\int_{A}\overline{Y}d\mu_{\eta}$\\

$\leq {1\over\nu}{^{*}\sum}_{0\leq j\leq \nu-1}\int_{A}\overline{H}^{2}_{{j\over\nu}} d\mu_{\eta}$ (as above)\\

$=\int_{A}{^{*}\sum}_{0\leq j\leq \nu-1}c_{j}(1,x)^{2}d\mu_{\eta}$\\

$=\int_{A}{^{*}\sum}_{0\leq j\leq \nu-1}(\overline{X}_{j+1\over\nu}-\overline{X}_{j\over\nu})^{2}d\mu_{\eta}
=\int_{A}\overline{Z}\simeq 0$\\

by $(\dag)$.\\

Case 2. Let $B\subset\overline{T}_{\nu}$, with $B\in\mathcal{D}_{\nu}$ and $\lambda_{\nu}(B)\simeq 0$. We can write $B=\bigcup_{1\leq j\leq s}I_{j}$, where $I_{j}$ is an interval of the form $[{i_{j}\over\nu},{i_{j}+1\over\nu})$, for some $0\leq i_{j}\leq\nu-1$, and ${s\over \nu}\simeq 0$. We compute, for $i_{j}\geq {\nu\over\kappa}-1$;\\

$\int_{\overline{\Omega}_{\eta}\times I_{j}}\overline{Y}(t,x)d\lambda_{\nu}d\mu_{\eta}$\\

$={1\over\nu}\int_{\overline{\Omega}_{\eta}}({k\over\nu}(\overline{H}^{2}_{{i_{j}\over\nu}}+\overline{H}^{2}_{{i_{j}-1\over\nu}}+\ldots+\overline{H}^{2}_{{i_{j}-{\nu\over\kappa}+1\over\nu}}))d\mu_{\eta}$\\

$={k\over\nu}\int_{\overline{\Omega}_{\eta}}(c_{i_{j}+1}^{2}+\ldots c_{i_{j}-{\nu\over\kappa}+2}^{2})d\mu_{\eta}$\\

We have that;\\

$\overline{X}(t,x)=\sum_{0\leq j\leq {t\nu}}c_{j}(1,x)\omega_{j}(x)$\\

$\overline{X}(t,x)^{2}=\sum_{0\leq j,k\leq t\nu}c_{j}(1,x)c_{k}(1,x)\omega_{j}(x)\omega_{k}(x)$ $(\sharp)$\\

Then;\\

$\int_{\overline{\Omega}_{\eta}}(\overline{X}_{t})^{2}(x)d\mu_{\eta}$\\

$=\sum_{0\leq j,k\leq [t\nu] }\int_{\overline{\Omega}_{\eta}}c_{j}(1,x)c_{k}(1,x)\omega_{j}\omega_{k}d\mu_{\eta}$ (using $(\sharp)$)\\

$=\sum_{0\leq j\leq [t\nu]}\int_{\overline{\Omega}_{\eta}}c_{j}^{2}(1,x)d\mu_{\eta}$ (using Lemma \ref{rep}) $(\sharp\sharp)$\\

It follows that;\\

$\int_{\overline{\Omega}_{\eta}}(c_{i_{j}+1}^{2}+\ldots c_{i_{j}-{\nu\over\kappa}+2}^{2})d\mu_{\eta}$\\

$=\int_{\overline{\Omega}_{\eta}}(\overline{X}^{2}_{{i_{j}+1\over\nu}}-\overline{X}^{2}_{{i_{j}+1-{\nu\over\kappa}\over\nu}})d\mu_{\eta}$\\

and, therefore, that;\\

$\int_{\overline{\Omega}_{\eta}\times I_{j}}\overline{Y}(t,x)d\lambda_{\nu}d\mu_{\eta}$\\

${\kappa\over\nu}\int_{\overline{\Omega}_{\eta}}(\overline{X}^{2}_{{i_{j}+1\over\nu}}-\overline{X}^{2}_{{i_{j}+1\over\nu}-{1\over\kappa}})d\mu_{\eta}\leq {C+1\over \nu}$\\

using Lemma \ref{tame}. We then have that;\\

$\int_{\overline{\Omega}_{\eta}\times B}\overline{Y}(t,x)d\lambda_{\nu}d\mu_{\eta}$\\

$={^{*}\sum}_{1\leq j\leq s}\int_{\overline{\Omega}_{\eta}\times I_{j}}\overline{Y}(t,x)d\lambda_{\nu}d\mu_{\eta}\leq {s(C+1)\over\nu}\simeq 0$\\

as required.\\

Case 3. Let $B\in\mathcal{D}_{\nu}\times\mathcal{C}_{\eta}$, with $(\lambda_{\nu}\times \mu_{\eta})(B)=\delta\simeq 0$. Let;\\

 $I=\{i:0\leq i\leq \nu,\mu_{\eta}(pr_{\eta}(B\cap pr_{\nu}^{-1}({i\over \nu})))>\delta^{1\over 2}\}$\\

Let $C=\bigcup_{i\in I}[{i\over\nu},{i+1\over\nu})$, so $C\in\mathcal{D}_{\nu}$, and let $B_{1}=B\cap pr_{\nu}^{-1}(C)$. As $B_{1}\subset B$, and by construction of $C$, we have that;\\

$\delta\geq (\lambda_{\nu}\times \mu_{\eta})(B_{1})>\delta^{1\over 2}\lambda_{\nu}(C)$\\

It follows that $\lambda_{\nu}(C)<\delta^{1\over 2}\simeq 0$. By Case 2, we have that;\\

$\int_{B_{1}}\overline{Y}d(\lambda_{\nu}\times \mu_{\eta})$\\

$\leq \int_{\overline{\Omega}_{\eta}\times C}\overline{Y}d(\lambda_{\nu}\times \mu_{\eta})\simeq 0$\\

Let $B_{2}=B\cap B_{1}^{c}$, then $B_{2}\in\mathcal{D}_{\nu}\times\mathcal{C}_{\eta}$ and $(\lambda_{\nu}\times \mu_{\eta})(B_{2})\simeq 0$, and to show Case 3, it is sufficient to prove that;\\

$\int_{B_{2}}\overline{Y}d(\lambda_{\nu}\times \mu_{\eta})\simeq 0$\\

We say that $B\in \mathcal{D}_{\nu}\times\mathcal{C}_{\eta}$ is wide, if there exists $\epsilon\simeq 0$, with $\mu_{\eta}(pr_{\eta}(B\cap pr_{\nu}^{-1}(t)))\leq\epsilon$, for $t\in\mathcal{T}_{\nu}$, and note that $B_{2}$ is wide. We are thus reduced to;\\

Case 4. Suppose $B$ is progressively measurable and wide, and let;\\

 $I_{j}=\{i\in{^{*}\mathcal{N}}:0\leq i\leq \nu-1, rem(2,i)=j, B\cap pr^{-1}_{\nu}({i\over\nu})\neq\emptyset\}$, for $0\leq j\leq 1$\\

$S_{j}=\bigcup_{i\in I_{j}}[{i\over\nu},{i+1\over\nu})$, $0\leq j\leq 1$\\

$B_{j}=B\cap pr^{-1}_{\nu}(S_{j})$, $0\leq j\leq 1$\\

Then $B=\bigcup_{0\leq j\leq 1}B_{j}$, and each $B_{j}$ is progressively measurable and wide. Let;\\

 $V_{j}=\{(i,s)\in{^{*}\mathcal{N}}^{2}:1\leq i\leq \nu-1, 0\leq s<2^{i}, rem(2,s)=j, B\cap pr^{-1}_{\nu}({i\over\nu})\neq\emptyset, B\cap pr^{-1}_{\eta}({s\over\eta})\neq \emptyset\}$, for $0\leq j\leq 1$\\

 $W_{j}=\bigcup_{(i,s)\in V_{j}}[{i\over\nu},{i+1\over\nu})\times [{s\over 2^{i}},{s+1\over 2^{i}})$, $0\leq j\leq 1$\\

 By the progressive measurability of $B$, $B=\bigcup_{0\leq j\leq 1}W_{j}$ and each $W_{j}$ is progressively measurable and wide. Let $B_{ij}=B_{i}\cap W_{j}$, $0\leq i\leq 1$, $0\leq j\leq 1$. Then $B=\bigcup_{0\leq i,j\leq 1}B_{ij}$ and each $B_{ij}$ is progressively measurable and wide. We say that $B\in \mathcal{D}_{\nu}\times\mathcal{C}_{\eta}$ is separated if, for all $(t,x)\in B$, $(t+{1\over\nu})\notin pr_{\nu}(B)$, and $(t,{[x2^{[t\nu]}]+1\over 2^{[t\nu]}})\notin B$, for $[\nu t]\geq 1$ and $0\leq [[x2^{[t\nu]}]\leq 2^{[t\nu]}-2$. . By construction, each $B_{ij}$ is separated, for $0\leq i,j\leq 1$. We are thus reduced to;\\

Case 5. Suppose $B$ is progressively measurable, wide and separated.\\

Observe that;\\

 $\kappa([\overline{X}]_{t}-[\overline{X}]_{t-{1\over\kappa}})$\\

 $=\kappa(\sum_{j=0}^{[\nu t]-1}(\overline{X}_{j+1\over\nu}-\overline{X}_{j\over\nu})^{2}-\sum_{j=0}^{[\nu (t-{1\over\kappa})]-1}(\overline{X}_{j+1\over\nu}-\overline{X}_{j\over\nu})^{2})$\\

 $=\kappa(\sum_{j=[\nu t]-{\nu\over\kappa}}^{[\nu t]-1}(c_{j+1})^{2})$\\

 $={\kappa\over\nu}(\sum_{j=[\nu t]-{\nu\over\kappa}}^{[\nu t]-1}(\overline{H}_{j\over\nu})^{2})$\\

 $=\overline{Y}_{t-{1\over\nu}}$\\

 It follows from Lemma \ref{tame}, that there exists $E'$ with $\mu_{\eta}(E')=0$, such that $1_{D}\overline{Y}_{t}\in SL^{1}(\overline{\Omega}_{\eta},\mu_{\eta})$, $(\dag\dag)$ for all $t\in{\mathcal{T}_{\nu}\setminus E'}$. We now compute;\\

 $\int_{B}\overline{Y}d(\lambda_{\nu}\times \mu_{\eta})$\\

 $\leq\int_{B\cap (D^{c}\times \mathcal{T}_{\nu})}\overline{Y}d(\lambda_{\nu}\times\mu_{\eta})+\int_{B\cap (\overline{\Omega}_{\eta}\times E')}\overline{Y}d(\lambda_{\nu}\times\mu_{\eta})$\\
 
 $+\int_{B\cap (D\times {\mathcal{T}_{\nu}\setminus E'})}\overline{Y}d(\lambda_{\nu}\times\mu_{\eta})$\\

 $\simeq \int_{B\cap (D\times {\mathcal{T}_{\nu}\setminus E'})}\overline{Y}d(\lambda_{\nu}\times\mu_{\eta})$ (by Cases 1,2)\\

 $=\int_{{\mathcal{T}_{\nu}\setminus E'}}\int_{\overline{\Omega}_{\eta}}1_{D}\overline{Y}_{t}d\mu_{\eta}d\lambda_{\nu}$\\

 $=\int_{{\mathcal{T}_{\nu}\setminus E'}}g(t)d\lambda_{\nu}$ (where $g\simeq 0$ on ${\mathcal{T}_{\nu}\setminus E'}$)\\

 $\simeq 0$\\

 where we have used the assumption $(\dag\dag)$ and the fact that $B$ is wide in the penultimate line. It follows that $\overline{Y}\in SL^{1}(\overline{\Omega}_{\eta}\times\mathcal{T}_{\nu})$ as required.\\

\end{proof}

\begin{theorem}
\label{final}
Any tame martingale $X$ is representable as a stochastic integral;\\

$X(t,x)=\int_{0}^{t}F(s,x) d\beta_{s}$\\

where $F:[0,1]\times{\overline{\Omega}_{\eta}}\rightarrow{\mathcal{R}}\in L^{2}([0,1]\times\overline{\Omega}_{\eta},L(\mu_{\eta}))$, and $\beta_{s}$ is a Brownian motion.

\end{theorem}

\begin{proof}
By Lemma \ref{lift}, there exists a nonstandard martingale $\overline{X}$, with ${^{\circ}(\overline{X}_{t})}=X_{^{\circ}t}$, for $t\in\overline{\mathcal{T}}_{\nu}$, a.e $L(\mu_{\eta})$. Let notation be as in Definition \ref{integrand}. Then by Lemma \ref{Sinteg}, we have shown that $\overline{Y}\in SL^{1}(\overline{\mathcal{T}_{\nu}}\times\overline{\Omega}_{\eta})$. We have that $\overline{S}=\int \overline{W}d\chi$, , where $\chi$ is Anderson's random walk, and, therefore, the quadratic variation;\\

 $[\overline{S}]=\overline{Q}=\int \overline{W}^{2}dt$.\\

 We claim that;\\

${^{\circ}}[\overline{S}](x,t)=\int_{0}^{t}f(x,s)ds$ a.e $dL(\mu_{\eta})$ $(*)$\\

where $f\in L^{1}(\overline{\Omega}_{\eta}\times [0,1])$. To see this, we first claim that $\overline{W}_{x}^{2}\in SL^{1}(\mathcal{T}_{\nu})$ $a.e$ $dL(\mu_{\eta})$, $(**)$. Suppose not, then, using Theorem 9 of \cite{A}, there exists $A$ with $L(\mu_{\eta})(A)>0$, such that;\\

${^{\circ}}\int_{0}^{1}\overline{W}_{x}^{2}d\lambda_{\nu}>\int_{0}^{1}{^{\circ}}\overline{W}_{x}^{2}d L(\lambda_{\nu})$.\\

But then;\\

${^{\circ}}\int_{A}\int_{0}^{1}\overline{W}^{2}d\lambda_{\nu}d\mu_{\eta}$\\

$\geq \int_{A}{^{\circ}}\int_{0}^{1}\overline{W}^{2}d\lambda_{\nu}dL(\mu_{\eta})$\\

$>\int_{A}\int_{0}^{1}{^{\circ}}\overline{W}^{2}dL(\lambda_{\nu})dL(\mu_{\eta})$\\

contradicting the fact that $\overline{W}\in SL^{2}(\overline{\mathcal{T}_{\nu}}\times\overline{\Omega_{\eta}},\lambda_{\nu}\times\mu_{\eta})$. Hence, $(**)$ is shown. Let $V_{x}(t)=\int_{0}^{t}\overline{W}_{x}^{2}d\lambda_{\nu}$, for $t\in [0,1]$. By $(**)$, we have that;\\

${^{\circ}}V_{x}(t)=\int_{0}^{t}{^{\circ}}\overline{W}_{x}^{2}dL(\lambda_{\nu})$\\

 We claim that ${^{\circ}}V_{x}$ is absolutely continuous, $(***)$. Suppose not, then there exist internal $B_{n}\subset\mathcal{T}_{\nu}$, with each $B_{n}$ a finite union of intervals with real endpoints, such that $\lambda(B_{n}\cap [0,1])<{1\over n}$, where $\lambda$ is Lebesgue measure, and $\epsilon\in\mathcal{R}_{>0}$, such that;\\

 $\int_{B_{n}}{^{\circ}}\overline{W}_{x}^{2}dL(\lambda_{\nu})>\epsilon$\\

Then ${^{\circ}}\int_{B_{n}}\overline{W}_{x}^{2}d\lambda_{\nu}\geq \int_{B_{n}}{^{\circ}}\overline{W}_{x}^{2}dL(\lambda_{\nu}>\epsilon$\\

and $\lambda_{\nu}(B_{n})\simeq \lambda(B_{n}\cap [0,1])<{1\over n}$\\

as each $B_{n}$ is a finite union of intervals. We can extend the sequence $(B_{n})_{n\in\mathcal{N}}$ to an internal sequence indexed by ${^{*}\mathcal{N}}$. By overflow, we can find an infinite $\rho\in{{^{*}\mathcal{N}}\setminus\mathcal{N}}$, with $B_{\rho}\in\mathcal{D}_{\nu}$, such that $\lambda_{\nu}(B_{\rho})<{1\over\rho}\simeq 0$ and;\\

$\int_{B_{\rho}}\overline{W}_{x}^{2}d\lambda_{\nu}>\epsilon$\\

This contradicts $(**)$. Hence, $(***)$ is shown. By real analysis, see \cite{Rud} Theorem 7.18, the derivative $f_{x}=({^{\circ}}V_{x})'$ exists a.e $d\lambda$, $f_{x}\in L^{1}([0,1])$ and;\\

${^{\circ}}[\overline{S}](x,t)=\int_{0}^{t}f(x,s)ds$ a.e $dL(\mu_{\eta})$\\

We compute;\\

$\int_{\overline{\Omega}_{\eta}}\int_{0}^{1}f(x,s)ds$\\

$=\int_{\overline{\Omega}_{\eta}}\int_{\overline{\mathcal{T}}_{\nu}}{^{\circ}}\overline{W}^{2}dL(\lambda_{\nu})dL(\mu_{\eta})$\\

$={^{\circ}}\int_{\overline{\Omega}_{\eta}}\int_{\overline{\mathcal{T}}_{\nu}}\overline{W}^{2}d\lambda_{\nu}d\mu_{\eta}$\\

which is finite, as $\overline{W}\in SL^{2}(\overline{\mathcal{T}_{\nu}}\times\overline{\Omega_{\eta}},\lambda_{\nu}\times\mu_{\eta})$, hence $f\in  L^{1}({\overline{\Omega_{\eta}}}\times [0,1])$, thus $(*)$ is shown.\\

We have that;\\

$[\overline{S}]_{t}\simeq [\overline{X}]_{t}=\overline{Z}_{t}$ $a.e dL(\mu_{\eta}$\\

This follows by computing the remainder term $r(x)$ in the proof of Lemma \ref{Sinteg} and using the fact that $\overline{Z}$ is $S$-continuous. This last is a consequence of the fact that $\overline{X}$ is $S$-continuous and $\overline{X}_{1}\in SL^{2}(\overline{\Omega}_{\eta})$, using Theorem 4.2.16 of \cite{AFKL}. Hence, we have;\\

${^{\circ}}[\overline{X}](x,t)=\int_{0}^{t}f(x,s)ds$ a.e $dL(\mu_{\eta})$ $(****)$\\

Define a new adapted process $g$ by;\\

$g(x,t)=f^{{-1\over 2}}(x,t)$ if $f(x,t)\neq 0$, and $g(x,t)=0$ otherwise.\\

Let $1_{g}$ be the characteristic function of the set $\{(x,t):g(x,t)=0\}$. We have that;\\

$E(\int_{0}^{1}g(x,s)^{2}d{^{\circ}} [\overline{X}])=E(\int_{0}^{1}g(x,s)^{2}f(x,s)ds)\leq 1$\\

hence, $g\in L^{2}(\nu_{{^{\circ}} [\overline{X}]})$. Let $G\in SL^{2}(\overline{X})$ be a 2-lifting of $g$, and $1_{G}$ a 2-lifting of $1_{g}$. We can assume that $G.1_{G}=0$. Define;\\

$\beta(x,t)={^{\circ}}(\int_{0}^{t}G(x,s)d\overline{X}(x,s)+\int_{0}^{t}1_{G}(x,s)d\chi(x,s))$\\

Since, $G$ and $1_{G}$ have disjoint supports;\\

$[\beta](x,t)={^{\circ}}[\int Gd\overline{X}](x,t)+{^{\circ}}[\int 1_{G}d\chi](x,t)$\\

$={^{\circ}}(\int G^{2}d\overline{X})(x,t)+{^{\circ}}[\int 1_{G}dt](x,t)$\\

$=\int_{0}^{t}g^{2}fds+\int_{0}^{t}1_{g}^{2}ds=\int_{0}^{t}1ds=t$\\

It follows, using Proposition 4.4.13 and 4.4.18 of \cite{AFKL}, this requires that $\overline{X}$ has infinitesimal increments, that $\beta$ is a Brownian motion, adapted to the filtration $(\overline{\Omega}_{\eta},\mathcal{D}_{t},L(\mu_{\eta})$. We have that $f^{{1\over 2}}\in L^{2}(\nu_{\beta})$ and;\\

$\int f^{{1\over 2}}d\beta=\int f^{{1\over 2}} gd{^{\circ}}\overline{X}+\int f^{{1\over 2}}1_{g}d{^{\circ}}\chi=\int f^{{1\over 2}}gd{^{\circ}}\overline{X}$\\

since  $f^{{1\over 2}}1_{g}=0$. It remains to show that ${^{\circ}}\overline{X}=\int f^{{1\over 2}}gd{^{\circ}}\overline{X}$, since, we then get the result by setting $F=f^{{1\over 2}}$. Using Doob's inequality;\\

$E(sup_{q\leq 1,q\in\mathcal{Q}}({^{\circ}}\overline{X}(q)-\int_{0}^{q}f^{{1\over 2}}gd{^{\circ}}\overline{X})^{2})$\\

$\leq 4E(({^{\circ}}\overline{X}(1)-\int_{0}^{1}f^{{1\over 2}}gd{^{\circ}}\overline{X})^{2})$\\

$=4E(\int_{0}^{1}(1-f^{{1\over 2}}g)^{2}d{^{\circ}}\overline{X})$\\

$=4E(\int_{0}^{1}(1-f^{{1\over 2}}g)^{2}dt)=0$\\

as $f^{{1\over 2}}g=1$, whenever $f\neq 0$.\\

\end{proof}

\end{document}